\newcommand{\R}{\mathbb{R}}
\newcommand{\N}{\mathbb{N}}
\newcommand{\Z}{\mathbb{Z}}
\numberwithin{equation}{section}
\newtheorem{thm}{Theorem}
\numberwithin{thm}{section} 
\newtheorem{cor}[thm]{Corollary}
\newtheorem{lemma}[thm]{Lemma}
\newtheorem{remark}[thm]{Remark}
\begin{document}

%% =========== to erase ============
%\thispagestyle{empty}
%\tableofcontents
%\newpage
%\setcounter{page}{1}

%--------------------------------------------------
%\title{On the convergence of linear subdivision schemes}
\title{Improving the convergence analysis of linear subdivision schemes}

\author{Nira Dyn}
\author{Nir Sharon\thanks{corresponding author}}

\affil{%
    School of Mathematical Sciences, Tel Aviv University, Tel Aviv, Israel}

%\author{}
%\author{Nira Dyn} 
%\ead{niradyn@tauex.tau.ac.il}
%\author{Nira Dyn and Nir Sharon} %\corref{cor1}} %\ead{nir.sharon@math.tau.ac.il}
%\address[nd]{School of Mathematical Sciences, Tel Aviv University, Tel Aviv, Israel} \cortext[cor1]{Corresponding author}
\date{\vspace{-5ex}}
%% ====================================

\maketitle
\begin{abstract}
This work presents several new results concerning the analysis of the convergence of binary, univariate, and linear subdivision schemes, all related to the 
{\it contractivity factor} of a convergent scheme. First, we prove that a convergent scheme cannot have a contractivity factor lower than half. Since the lower this factor is, the faster is the convergence of the scheme, schemes with contractivity factor $\frac{1}{2}$, such as those generating spline functions, have optimal convergence rate.

Additionally, we provide further insights and conditions for the convergence of linear schemes and demonstrate their applicability in an improved algorithm for determining the convergence of such subdivision schemes.
\end{abstract}

%% ====================================
\section{Introduction}

This paper presents an improvement of the known analysis concerning the convergence of binary, univariate, linear subdivision schemes. Such a scheme repeatedly refines sequences of numbers according to simple linear refinement rules. If the subdivision scheme is convergent, namely, it uniformly converges to a limit from any initial data sequence, then, it generates in the limit a graph of a univariate continuous limit function. These schemes generate curves in $\R^n$  from initial data consisting of a sequence of vectors (points) in $\R^n$ by operating on each component separately. Given refinement rules, which are defined by a finite sequence of numbers (see Section 2.1), it is necessary to determine if the scheme is convergent and how fast. We address these two questions in Section 3 and improve the algorithm to determine if a scheme is convergent. Before, in Section 2, we give a short introduction to binary univariate linear subdivision schemes and their convergence. We chose this class of schemes because the ideas leading to our improvements are simpler to present for this class. We believe that our ideas can be extended to a rather wide class of binary, multivariate, linear subdivision schemes.

%% ====================================
\section{A short introduction to univariate linear subdivision schemes} \label{subsec:subdivision}
This section introduces notions, notation, and techniques related to the convergence analysis of binary, univariate, and linear subdivision schemes. 

%----------------------------------------
\subsection{Convergence}
Two refinement rules of a  binary, univariate, linear subdivision scheme $S_a$ have the form 
\begin{equation} \label{eqn:refinement}
    (S_a{f})_i=\sum_{j\in \Z} {\bf a}_{i-2j}{f}_j. 
\end{equation} 
Here $f$ and $ S_a(f)$ are two sequences of real numbers defined on $\mathbb Z$, and $\bf a$ is a finite sequence of real numbers termed the mask of the scheme.
All sequences $f$ considered here satisfy $\norm{f}<\infty$ and by the finiteness of the mask of the scheme so are all sequences obtained by the subdivision scheme. Unless otherwise stated, the norm used here is the supremum norm, either over the real line for functions or over $\Z$ for sequences.

Note that~\eqref{eqn:refinement} consists of two refinement rules: one for the even indices $i$ and the other for the odd ones. The repeated application of the refinement rules of the scheme $S_a$ to the sequence $ f^0 $ leads to the sequence of sequences,
\[ { f}^{k+1}= S_a({f}^k), \quad k=0,1,\ldots. \] 

Subdivision schemes with refinement rules $a_0=1,\ a_{2i}=0,\ i\neq 0$ are termed 
interpolatory schemes, because $f^{k+1}_{2i}=f^k_i,\ i\in \Z$ and the limit function interpolates the points $(i,f^0_i), i\in \Z$.

The subdivision scheme $S_a$ is termed (uniformly) convergent if for any initial data $f^0=\left\{f_{i}^{0}: i \in \mathbb{Z}\right\}$, the sequence of polygonal lines $\left\{\mathbf{f}^{k}(t): k \in \mathbb{Z}_{+}\right\}$, converges uniformly, where
\[
\mathbf{f}^{k}(t) \in \Pi_{1}, \, t \in(i 2^{-k},(i+1) 2^{-k}), \quad \text{ and } \quad \mathbf{f}^{k}\left(2^{-k} i\right)=f_{i}^{k}, \quad i \in \mathbb{Z} .
\]
The limit function, which is necessarily continuous, is denoted by $S_a^{\infty} f^0$. It is also required that there exists $f^0$ such that $S_a^{\infty} f^0 \not \equiv 0$.

The first requirement in the above definition of convergence is equivalent to the existence of a continuous function $f^*$, such that for any closed interval $I \subset \mathbb{R}$
\begin{equation} \label{eqn:uniform_convergence}
    \lim _{k \rightarrow \infty} \sup _{i \in 2^{k} I}\left|f_{i}^{k}-f^*\left(2^{-k} i\right)\right|=0 .
\end{equation}
Clearly $f^*=S_a^{\infty} f^0$.

For $f=\left\{f_{i}: i \in \mathbb{Z}\right\}$ let $\Delta f=\left\{(\Delta f)_{i}=f_{i+1}-f_{i}: i \in \mathbb{Z}\right\}$. Then, the convergence of a linear subdivision scheme~\eqref{eqn:uniform_convergence} is obtained if and only if there exist $\mu \in (0,1)$ and $L\in \N$ such that
\[  \Delta S^L_a(f) \le \mu \Delta f . \]
We term the factor $\mu$ \textit{contractivity factor} and the integer $L$ \textit{contractivity number}. In such a case, the scheme is  convergent; for proof, see~\cite[Theorem 3.2]{dyn1992subdivision_book}. Furthermore, the contractivity factor and number imply that there exists a constant $C$, independent of $k$, such that $\norm{ \mathbf{f}^{(k+1)L}(t)-\mathbf{f}^{kL}(t) }\le C \mu^k$.% Unless otherwise stated, the norm used is the supremum norm, either over the real line for functions or over $\Z$ for sequences. 
As a result, the convergence rate of $S_a^L$ is bounded by
\[ \norm{\mathbf{f}^{kL}-f^* } \le \sum_{j=k}^\infty \norm{\mathbf{f}^{jL}-\mathbf{f}^{(j+1)L}} \le \sum_{j=k}^\infty C \mu^j = C \mu^k \sum_{j=0}^\infty \mu^j =\left(\frac{C}{1-\mu} \right) \mu^k . \] % {\color  {red} Where is L???}
In other words, the contractivity factor directly indicates the convergence rate of the subdivision scheme $S_a^L$ to its limit. %\com{DO WE HAVE LOWER BOUND TOO? Alternatively, Can we say that this bound is tight for a certain case?}

%----------------------------------------
\subsection{Notation and the analysis via Laurent polynomials}

Consider the univariate, linear subdivision scheme $S_a$ with the mask $\mathbf{a}=\left\{a_{i}: i \in \mathbb{Z}\right\}$. We define its symbol
\[
a(z)=\sum_{i \in \mathbb{Z}} a_{i} z^{i} .
\]
Since the schemes we consider have masks of finite support, the corresponding symbols are Laurent polynomials, namely polynomials in positive and negative powers of the variable z. 

The notion of Laurent polynomials enables us to write the refinement in an algebraic form. Let $F(z ; f)=\sum_{j \in \mathbb{Z}} f_{j} z^{j}$ be a formal generating function associated with the sequence $f$. Then, \eqref{eqn:refinement} becomes
\[
F\left(z ; S_{a} f\right)=a(z) F\left(z^{2} ; f\right) .
\]
It follows from the above equality that the symbol of $S^L_a$ is
\begin{equation}
\label{eq:symbol_power}
a^L(z)=a(z)a^{L-1}(z^2)=a(z)a(z^2)a(z^4) \cdot \cdot \cdot a(z^{2(L-1)})
\end{equation}

A known theorem, see~\cite{dyn1992subdivision_book} states that if $S_a$ is a convergent subdivision scheme, then 
\begin{equation} \label{eqn:necessary_cond}
  \sum_{j} a_{2 j}=\sum_{j} a_{2j+1}=1 \, .
\end{equation}
Therefore, the symbol of a convergent subdivision scheme satisfies,
\begin{equation} \label{eqn:necessary_cond_2}
    a(-1)=0, \quad a(1)=2 .
\end{equation}
Namely, the symbol factorizes into $a(z)=(1+z) q(z)$ for some Laurent polynomial $q$ with $q(1)=1$. Interestingly, the subdivision $S_{q}$, defined by the coefficients of the symbol $q(z)$ is related to $S_{a}$ with symbol $a(z)$ as follows, 
\begin{equation} \label{eqn:Sa_Sq}
\Delta\left(S_{a} f\right)=S_{q} \Delta f .
\end{equation}
Relation~\eqref{eqn:Sa_Sq} implies 
\begin{equation}
    \norm{\Delta f^{k+1}} \le  \Bigl\Vert{S_q}\Bigr\Vert \norm{\Delta f^{k} } .
\end{equation}
Note that $\norm{S_q}$ is an induced norm of $S_q$, implying that the minimal number $\eta >0$ such that for all $f$ with $\norm{f}<\infty$, $\ \norm{\Delta S_q f} \le \eta \norm{\Delta f }$ is $\eta = \norm{S_q}$.

%----------------------------------------
\subsection{Determining the convergence of a given scheme}

The classical formulation using Laurent polynomials suggests an algorithm to determine the convergence of a scheme with a given mask. This algorithm seeks an indication of convergence in the form of a contractivity factor of the scheme or one of its powers since
\begin{equation} \label{eqn:contractivity_high_powers}
    \norm{\Delta f^{k}}= \norm{S_q^k(\Delta f^{0})} \le   \norm{S_q^k} \norm{\Delta f^{0}}, \quad k\in \N. 
\end{equation}
The algorithm gets the maximum power $M$ as an input parameter and computes $\norm{S_q^k}$, $ k \le M$ to find a contractivity factor. Then, if no contractivity is obtained, namely, $\norm{S_q^k}\ge 1$ for all $k \le M$, the algorithm returns that the convergence question is inconclusive. Otherwise the algorithm  determines $L$ as the first $k$ satisfying 
$\norm{S_q^k} < 1$, and $\mu=\norm{S_q^L}$.
Note that %higher powers, or 
large $L$ means 
 operating $L$ steps of refinement in a row. However, practically, a high value $L$ means a slow convergence. In particular, for a given $L$ if $\norm{S_{q}^{L}}=\mu<1$, then
\begin{equation} \label{eqn:contractivity_with_L}
    \norm{\Delta f^{k}}  \leq 
    \norm{S_q^L(\Delta f^{0})}^{\left[\frac{k}{L}\right]} \norm{\Delta f^{k-L\left[\frac{k}{L}\right]}} \le
    \mu^{\left[\frac{k}{L}\right]} \max_{0 \leq \ell<L}\norm{\Delta f^{\ell}}, \quad k \in \N .
\end{equation}
The above equation implies that the average contractivity factor of each step of refinement is of order $\mu^{\frac{1}{L}}$, which is larger than $\mu$.
Therefore, as $L$ grows, refining by such a scheme requires more steps to reach convergence. 

An algorithm that concludes the above is suggested in~\cite{dyn2002analysis} and is given for completeness in Algorithm~\ref{alg:conver_test}.

\begin{algorithm}[ht]
\caption{Convergence of linear subdivision scheme}
\label{alg:conver_test}
\begin{algorithmic}[1]
\REQUIRE The symbol $a(z)$. The maximum number of iterations $M$.
\ENSURE The convergence status of the scheme
\IF{$a(1) \neq 2$ or $a(-1) \neq 0$} 
\RETURN \textit{``The scheme does not converge''}
\ENDIF
\STATE $q(z) \gets \frac{a(z)}{1+z}$.
\STATE Set $q_{1}(z)=q(z)$ and denote $q_{1}(z)=\sum_{i} q_{i}^{[1]} z^{i}$.
\FOR{$L=1, \ldots, M$}
\IF{$\max_{0 \leq i<2^{L}} \sum_{j\in \Z}\abs{q_{i-2^{L} j}^{[L]}}<1$} 
\RETURN \textit{``The scheme is convergent''}
\ELSE
\STATE  Set $q_{L+1}(z)=q(z) q_{L}\left(z^{2}\right)$ and denote $q_{L+1}(z)=\sum_{i} q_{i}^{[L+1]} z^{i}$.
\ENDIF
\ENDFOR     
\RETURN \textit{``Inconclusive! $\norm{S^k_{q}}\ge 1$ %is not contractive
for all} $k \le M$''.
\end{algorithmic}
\end{algorithm}

%% ====================================
\section{Improving the convergence analysis of univariate linear schemes}

In this section, we derive results that relate the existence of a contractivity factor to the mask coefficients of $q(z)$. These results are used to improve Algorithm 
\ref{alg:conver_test}.

The first result in the first Subsection is a lower bound on on the size of a contractivity factor with a simple proof. 
%----------------------------------------
\subsection{Half is the smallest possible contractivity factor} %best you can achieve}

We start with an auxiliary result:
\begin{lemma} \label{lemma:affine_sum}
   Let $a,b\in \R$ satisfy $a+b=1$. Then, $\max \{ \abs{a} ,\abs{b} \} \ge \frac{1}{2}$
\end{lemma}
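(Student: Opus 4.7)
The plan is to reduce the statement to a single application of the triangle inequality, since the hypothesis $a+b=1$ means the absolute value on the left of $|a+b|\le |a|+|b|$ is pinned at $1$.

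First I would start from the identity $1 = a+b$, take absolute values, and apply the triangle inequality to obtain $1 = |a+b| \le |a| + |b|$. Then I would bound each summand on the right by the maximum, giving $|a| + |b| \le 2\max\{|a|,|b|\}$. Chaining these yields $1 \le 2\max\{|a|,|b|\}$, which is the desired conclusion after dividing by $2$.

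An equivalent route, which might read even more cleanly in the paper, is a one-line proof by contradiction: if $\max\{|a|,|b|\} < \tfrac12$, then $|a+b| \le |a|+|b| < 1$, contradicting $a+b=1$. Either formulation works and fits in two or three lines.

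There is no real obstacle here; the lemma is essentially a restatement of the triangle inequality. The only thing to be mildly careful about is that the bound is sharp (achieved by $a=b=\tfrac12$), which is worth noting only implicitly since the authors evidently intend to apply it to mask coefficients summing to $1$ as in equation~\eqref{eqn:necessary_cond}, where the case of equality corresponds precisely to the B-spline-type schemes mentioned in the abstract.
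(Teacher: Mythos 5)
Your proof is correct. It takes a slightly different route from the paper's: you chain the triangle inequality, $1 = \abs{a+b} \le \abs{a} + \abs{b} \le 2\max\{\abs{a},\abs{b}\}$, whereas the paper argues by cases, observing that if $a \ge \tfrac{1}{2}$ the claim is immediate, and otherwise $b = 1-a > \tfrac{1}{2}$. Both arguments are two lines long and entirely elementary, so the difference is cosmetic rather than substantive. If anything, your version has a slight edge in generality: it extends verbatim to complex $a,b$ and to $n$ summands (giving a lower bound of $\tfrac{1}{n}$), while the paper's case split exploits the ordering of the reals. Your observation about sharpness at $a=b=\tfrac{1}{2}$ is also apt, as that is exactly the spline case singled out in the paper's subsequent remark; note only that the lemma is applied to the even and odd coefficient sums $S_e^L$ and $S_o^L$ of the difference scheme's symbol $q^L(z)$, not directly to the mask coefficients in~\eqref{eqn:necessary_cond}.
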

\begin{proof}
    If $a\ge \frac{1}{2}$ we are done since then %$b=1-a\ge $, and
    $\max \{ \abs{a} ,\abs{b} \} \ge \abs{a} \ge \frac{1}{2}$. Otherwise, $a<\frac{1}{2}$ and so $b=1-a > 1- \frac{1}{2} = \frac{1}{2}$.
\end{proof}
The next theorem leads to the main result of this subsection. 
\begin{thm} \label{thm:biggerthanhalf}
    Let $S_a$ be a convergent scheme. Then, for all $L \in \mathbb{N}$,
    \begin{equation}
        \norm{S_q^L} \ge \frac{1}{2} . 
    \end{equation}
\end{thm}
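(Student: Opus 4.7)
The plan is to lift the straightforward $L=1$ argument to arbitrary $L$ by applying Lemma~\ref{lemma:affine_sum} to the symbol $q^L(z)$ of $S_q^L$. Since $S_a$ is convergent, the factorization $a(z)=(1+z)\,q(z)$ together with $a(1)=2$ forces $q(1)=1$, and hence
\[
q^L(1)=\prod_{k=0}^{L-1}q\!\left(1^{2^k}\right)=q(1)^L=1\qquad\text{for every }L\in\N.
\]

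Next, I would decompose the coefficients of $q^L$ by the parity of the index, setting
\[
Q_e^{[L]}=\sum_{j\in\Z} q_{2j}^{[L]},\qquad Q_o^{[L]}=\sum_{j\in\Z} q_{2j+1}^{[L]}.
\]
The identity $q^L(1)=1$ reads precisely $Q_e^{[L]}+Q_o^{[L]}=1$, so Lemma~\ref{lemma:affine_sum} immediately yields $\max\{|Q_e^{[L]}|,\,|Q_o^{[L]}|\}\ge\tfrac12$. This mirrors exactly the calculation that underlies the $L=1$ case, where the necessary conditions~\eqref{eqn:necessary_cond} combined with $a_i=q_i+q_{i-1}$ give $Q_e+Q_o=\sum_j a_{2j}=1$.

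The remaining step is to bound $\|S_q^L\|$ from below by $\max\{|Q_e^{[L]}|,|Q_o^{[L]}|\}$. For $L=1$ this is the direct estimate
\[
\|S_q\|=\max\!\left\{\sum_{j}|q_{2j}|,\;\sum_{j}|q_{2j+1}|\right\}\ge\max\{|Q_e|,|Q_o|\},
\]
via the triangle inequality, and together with the previous paragraph it gives the theorem for $L=1$. For general $L$, I would aim to establish an analogous bound by re-aggregating the $2^L$ residue-class buckets used to compute $\|S_q^L\|$ in Algorithm~\ref{alg:conver_test} into the two parity classes that appear in $Q_e^{[L]}$ and $Q_o^{[L]}$.

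The main obstacle I anticipate is precisely this last bucket estimate. The operator norm $\|S_q^L\|$ is a maximum over $2^L$ residue classes, whereas $Q_e^{[L]}$ and $Q_o^{[L]}$ aggregate the mask over only two parity classes, so a naive triangle inequality spreads each parity sum across $2^{L-1}$ buckets and loses a factor. Overcoming this gap, perhaps by exploiting the factorization $a^L(z)=\prod_{k=0}^{L-1}(1+z^{2^k})\,q^L(z)$ which allows $(1+z)$ to be peeled off $a^L$ and used just as in the $L=1$ step, or by selecting a single bucket whose row-sum already majorizes $|Q_e^{[L]}|$ or $|Q_o^{[L]}|$, will be the technical heart of the proof.
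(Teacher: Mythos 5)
Your argument for $L=1$ is exactly the paper's: $q^L(1)=1$ gives $S_e^L+S_o^L=1$ (the paper's notation for your $Q_e^{[L]},Q_o^{[L]}$, see \eqref{eqn:SeL_SoL}), Lemma~\ref{lemma:affine_sum} gives $\max\{\abs{S_e^L},\abs{S_o^L}\}\ge\tfrac12$, and the two-bucket triangle inequality finishes. The step you leave open for $L\ge 2$ is a genuine gap in your proposal, and you have located it correctly --- but you should know the paper does not really close it either. In \eqref{eqn:bounding_sqLnorm} the paper simply writes $\norm{S_q^L}=\max\bigl\{\sum_j\abs{q^L_{2j+1}},\sum_j\abs{q^L_{2j}}\bigr\}$, i.e., it treats $S_q^L$ as the \emph{binary} scheme whose mask is $q^{[L]}$. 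Under that reading your two-bucket estimate applies verbatim and your proof is already complete; this is the route to take if you want to reproduce the paper's statement.

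If, however, $\norm{S_q^L}$ denotes the operator norm of the $L$-fold composition $(S_q)^L$ --- which is how it is used in \eqref{eqn:contractivity_high_powers} and computed in Algorithm~\ref{alg:conver_test}, namely $\max_{0\le i<2^L}\sum_{j}\abs{q^{[L]}_{i-2^Lj}}$ --- then neither of your proposed rescues can succeed, because the inequality between the $2^L$-bucket maximum and the two-bucket maximum goes the wrong way: each parity sum is a sum of $2^{L-1}$ nonnegative bucket sums, so $\max\bigl\{\sum_j\abs{q^{[L]}_{2j}},\sum_j\abs{q^{[L]}_{2j+1}}\bigr\}\ge\norm{(S_q)^L}$, and a lower bound on the left-hand side says nothing about the right. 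Indeed the operator-norm version of the statement fails: for the cubic B-spline one has $q(z)=\tfrac{(1+z)^3}{8}$, hence $q^2(z)=\tfrac{(1+z+z^2+z^3)^3}{64}$, whose coefficients are all positive and whose four residue-class sums modulo $4$ all equal $\tfrac14$, so that $\norm{(S_q)^2}=\tfrac14<\tfrac12$. Only the weaker bound $\norm{(S_q)^L}\ge 2^{-L}$ (equivalently, a per-refinement-step contraction rate of at least $\tfrac12$) survives, and that is precisely what your ``spread over $2^{L-1}$ buckets'' estimate already yields. So either adopt the two-bucket reading, in which case you are done with what you have, or note that for the iterated operator the claimed inequality must be replaced by $\norm{(S_q)^L}^{1/L}\ge\tfrac12$.
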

\begin{proof}
    Since $q(z) = \frac{a(z)}{1+z}$, we have that $q(1) = 1$. In view of equation \eqref{eq:symbol_power}, the symbol of $S_q^L$ is  
    \begin{equation} \label{eqn:symbol_qL}
    q^L(z) = q(z)q(z^2)\cdots q(z^{{2^{L-1}}}).
       % q^L(z) = q(z^{2^{L-1}})q(z^{2^{L-2}})\cdots q(z^{2})q(z) .
    \end{equation}
    Therefore, we have that $q^L(1) = 1$ and $q^L(-1)=q(-1)$. Denote by 
    \begin{equation} \label{eqn:SeL_SoL}
      S_e^L = \sum_{j}q^L_{2j} \quad \text{  and   } \quad S_o^L = \sum_{j}q^L_{2j+1}.  
    \end{equation}
     Then, $S^L_e+S^L_o=1$. We use the definition of $\norm{S_q^L}$ and the triangle inequality to derive by Lemma~\ref{lemma:affine_sum} that, 
    \begin{equation} \label{eqn:bounding_sqLnorm}
        \norm{S_q^L}  = \max \biggl\{ \sum_{j}\abs{q^L_{2j+1}},\sum_{j}\abs{q^L_{2j}} \biggr\} \ge \max \biggl\{ \abs{S^L_o}, \abs{S^L_e} \biggr\} \ge \frac{1}{2}
    \end{equation}
    As required. 
\end{proof}
Theorem~\ref{thm:biggerthanhalf} together with~\eqref{eqn:contractivity_with_L} means the following: 
\begin{cor}
    The contractivity factor of a convergent univariate linear subdivision scheme is not smaller than half.
\end{cor}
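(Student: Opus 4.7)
The plan is to obtain this corollary as an essentially immediate consequence of Theorem~\ref{thm:biggerthanhalf}. First, I would recall that iterating the relation $\Delta(S_a f) = S_q \Delta f$ from equation~\eqref{eqn:Sa_Sq} $L$ times gives $\Delta(S_a^L f) = S_q^L \Delta f$ for every admissible sequence $f$. By the definition of the induced operator norm, the smallest constant $\mu$ satisfying $\|\Delta S_a^L f\| \le \mu \|\Delta f\|$ for all $f$ is exactly $\|S_q^L\|$; any other contractivity factor associated with contractivity number $L$ must therefore majorise this norm, i.e.\ $\mu \ge \|S_q^L\|$.

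Second, I would invoke Theorem~\ref{thm:biggerthanhalf}, which supplies the lower bound $\|S_q^L\| \ge \tfrac{1}{2}$ for every $L \in \N$. Chaining the two inequalities yields $\mu \ge \tfrac{1}{2}$, which is the assertion of the corollary. One can also read this directly from equation~\eqref{eqn:contractivity_with_L}: there the contractivity factor is identified with $\|S_q^L\|$, so the lower bound from Theorem~\ref{thm:biggerthanhalf} translates verbatim into the statement at hand.

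There is essentially no obstacle here; the substantive work has already been carried out in Theorem~\ref{thm:biggerthanhalf} and its use of Lemma~\ref{lemma:affine_sum}. The only point worth stating with care is the interpretation of the phrase \emph{contractivity factor}: whether one means the minimal such $\mu$ (which equals $\|S_q^L\|$) or any $\mu \in (0,1)$ verifying the contractivity inequality. In both readings the conclusion $\mu \ge \tfrac{1}{2}$ is immediate, since $\|S_q^L\|$ itself is bounded below by $\tfrac{1}{2}$ regardless of $L$.
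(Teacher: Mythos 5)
Your proposal is correct and follows essentially the same route as the paper, which presents this corollary as an immediate consequence of Theorem~\ref{thm:biggerthanhalf} combined with the identification of the minimal contractivity factor for contractivity number $L$ with the induced norm $\norm{S_q^L}$. Your added remark that any admissible $\mu$ majorises $\norm{S_q^L}$, so the bound holds under either reading of ``contractivity factor,'' is a worthwhile clarification but not a different argument.
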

\begin{remark}
    There exist linear subdivision schemes that satisfy the optimal contractivity factor of $\frac{1}{2}$. For example, subdivision schemes that generate splines of degree $m\ge 1$ have the symbol $ a^{[m]}(z) = \frac{(1+z)^{m+1}}{2^{m}}$. Therefore, $q(z) = \left( \frac{1+z}{2}\right)^m$ and so $\norm{S_q} = \frac{1}{2}$, that is an optimal contractivity with $L=1$.
\end{remark}

%----------------------------------------
\subsection{Determining convergence}
%\subsection{\texorpdfstring{Determining convergence without computing the masks of powers of $S_q$}{Determining convergence without computing the masks of powers of Sq}}

The notions $S_e$ and $S_o$ as defined in~\eqref{eqn:SeL_SoL} play a central role in the following analysis. By the necessary condition~\eqref{eqn:necessary_cond_2} $S_q$ exists and satisfies $q(1)=1$. By~\eqref{eqn:symbol_qL} we have $q^L(1) = 1$ and $q^L(-1) = q(-1)$, and by~\eqref{eqn:SeL_SoL} we get that $S_e^L \pm S_o^L = q(\pm 1)$. Therefore,
\begin{equation} \label{eqn:se_so_indepndentL}
    S_e^L=\frac{1+q(-1)}{2}\quad \text{  and  }  \quad S_o^L=\frac{1-q(-1)}{2} .
\end{equation} 
In other words,
\begin{lemma}
   For all $L\in \N$, $S_e^L=S_e$ and $S_o^L=S_o$.
\end{lemma}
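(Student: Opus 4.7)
The plan is essentially to invoke the identity displayed immediately before the lemma, equation~\eqref{eqn:se_so_indepndentL}, and observe that its right-hand side is independent of $L$. So the proof is a one-line consequence of what has already been derived; my job is simply to record it cleanly.

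First I would recall from~\eqref{eqn:symbol_qL} that $q^L(z)=q(z)q(z^2)\cdots q(z^{2^{L-1}})$, so $q^L(1)=\prod_{j=0}^{L-1}q(1)=1$ since $q(1)=1$, while $q^L(-1)=q(-1)\cdot\prod_{j=1}^{L-1}q(1)=q(-1)$ because $(-1)^{2^j}=1$ for $j\ge 1$. Next, by the very definition of $S_e^L$ and $S_o^L$ in~\eqref{eqn:SeL_SoL}, evaluating the symbol $q^L(z)=\sum_i q_i^L z^i$ at $z=\pm 1$ yields
\begin{equation*}
S_e^L+S_o^L=q^L(1)=1,\qquad S_e^L-S_o^L=q^L(-1)=q(-1).
\end{equation*}
Solving this linear system gives exactly~\eqref{eqn:se_so_indepndentL}, namely $S_e^L=\tfrac{1+q(-1)}{2}$ and $S_o^L=\tfrac{1-q(-1)}{2}$, expressions that do not involve $L$.

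Finally, I would specialize to $L=1$: since $S_e=S_e^1$ and $S_o=S_o^1$, the same formulas give $S_e=\tfrac{1+q(-1)}{2}$ and $S_o=\tfrac{1-q(-1)}{2}$. Comparing the two expressions yields $S_e^L=S_e$ and $S_o^L=S_o$ for every $L\in\N$, which is the claim. There is no real obstacle here; the only subtlety worth stating explicitly is the observation that $q(z^{2^j})$ evaluates to $q(1)=1$ at $z=-1$ whenever $j\ge 1$, which is what forces $q^L(-1)$ to depend only on the single factor $q(-1)$ and hence only on the mask of $S_q$, not on $L$.
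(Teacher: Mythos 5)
Your proof is correct and follows exactly the paper's own route: the paper derives $q^L(1)=1$ and $q^L(-1)=q(-1)$ from the factorization~\eqref{eqn:symbol_qL}, solves $S_e^L\pm S_o^L=q^L(\pm 1)$ to obtain~\eqref{eqn:se_so_indepndentL}, and reads off the lemma as an immediate consequence. Your only addition is the explicit remark that $(-1)^{2^j}=1$ for $j\ge 1$, which the paper leaves implicit.
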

 One conclusion from the above is that the case $S_e=S_o =\frac{1}{2}$ implies that $q(-1) = 0$. Note that the multiplicity of $z+1$ in the symbol $a(z)$ is crucial for smoothness analysis~\cite{dyn2002analysis}. Moreover, it is possible to prove the following:
\begin{thm} \label{thm:positive_coefs}
    Let $S_a$ be a linear scheme that satisfies~\eqref{eqn:necessary_cond}. If the coefficients of $q(z)$ are non-negative, then % $S_e$ and $S_o$ are both positive and bounded by $1$, and 
    $S_a$ is convergent, with contractivity factor $\mu=\max\{S_e, S_o\}<1$.
\end{thm}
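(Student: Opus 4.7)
The plan is to exploit the non-negativity of the coefficients of $q(z)$ to evaluate $\norm{S_q}$ in closed form and then invoke the contractivity criterion from Section~\ref{subsec:subdivision} with contractivity number $L=1$.

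First I would recall that the operator norm of $S_q$ acting on the bounded sequences is $\norm{S_q} = \max\bigl\{\sum_{j}|q_{2j}|, \sum_{j}|q_{2j+1}|\bigr\}$ (this is precisely the quantity evaluated at $L=1$ inside Algorithm~\ref{alg:conver_test}, and it is attained by a suitable sign-choice sequence bounded by one). The hypothesis $q_i \ge 0$ lets me drop the absolute values, collapsing the expression to $\norm{S_q} = \max\{S_e, S_o\}$. Next, since $a(z) = (1+z) q(z)$ and $a(1) = 2$, we have $q(1) = 1$, so $S_e + S_o = 1$; combined with $S_e, S_o \ge 0$ this gives $\max\{S_e, S_o\} \le 1$, with equality only when one of $S_e, S_o$ vanishes.

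In the generic situation both $S_e, S_o > 0$, hence $\mu := \max\{S_e, S_o\} < 1$. Then relation~\eqref{eqn:Sa_Sq} yields $\norm{\Delta S_a f} = \norm{S_q \Delta f} \le \mu \norm{\Delta f}$, and the contractivity characterization of convergence recalled in Section~\ref{subsec:subdivision} delivers the conclusion: $S_a$ is convergent with contractivity factor $\mu = \max\{S_e, S_o\}$ and contractivity number $L = 1$.

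The main obstacle is the boundary case in which one of $S_e, S_o$ equals zero. Non-negativity then forces $q$ to be supported on a single parity, so $q(z) = p(z^2)$ or $q(z) = z\,p(z^2)$; the associated $a(z) = (1+z)p(z^2)$ makes the refinement rule a pure duplication (and possibly shift) of values, whose polygonal lines accumulate a fixed jump of size $\tfrac{1}{2}|f^0_{i+1}-f^0_i|$ near each integer and therefore cannot converge uniformly to a continuous limit. Since this contradicts convergence, the strict inequality $\mu < 1$ in the theorem is an honest part of the conclusion precisely when $q$ has at least one positive coefficient of each parity, and the proof must either impose this mild genericity explicitly or read it off from the incompatibility of the degenerate case with the convergent-scheme hypothesis.
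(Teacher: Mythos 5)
Your core argument is exactly the paper's: non-negativity lets you drop the absolute values in $\norm{S_q}=\max\bigl\{\sum_j\abs{q_{2j}},\sum_j\abs{q_{2j+1}}\bigr\}$, the normalization $q(1)=1$ gives $S_e+S_o=1$, hence $\mu=\max\{S_e,S_o\}\le 1$, and contractivity with $L=1$ yields convergence. You are also right to flag that the strict inequality $\mu<1$ fails precisely when one of $S_e,S_o$ vanishes; the paper's proof simply writes ``$<1$'' without addressing this, so your scrutiny of the boundary case is warranted, and the theorem does need the extra hypothesis $S_e,S_o>0$ (equivalently $\abs{q(-1)}<1$, equivalently $q$ having a positive coefficient of each parity).

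However, your proposed resolution of the boundary case contains a genuine error. If $q(z)=p(z^2)$ the refinement is not a pure duplication: it is convolution with $p$ followed by duplication, $(S_af)_{2i}=(S_af)_{2i+1}=\sum_j p_{i-j}f_j$, and the repeated convolutions can smooth away the jumps you describe. Concretely, take $a(z)=\frac{(1+z)(1+z^2)}{2}$, so $q(z)=\frac{1+z^2}{2}$ has non-negative coefficients with $S_e=1$, $S_o=0$. After one step every sequence satisfies $f^k_{2j}=f^k_{2j+1}$, and on such duplicated sequences $S_a$ acts as the linear B-spline (hat) scheme followed by duplication; the polygonal lines are those of the hat scheme composed with a reparametrization differing from the identity by at most $2^{-k}$, so this scheme converges uniformly to a continuous, piecewise linear limit even though $\norm{S_q}=1$ (a contractivity factor appears only at $L=2$, where one computes $\norm{S_q^{2}}=\frac{1}{2}$). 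So the degenerate case is not incompatible with convergence; what fails there is only the specific conclusion that $\max\{S_e,S_o\}$ serves as a contractivity factor with contractivity number $1$. The clean fix is to impose the positivity of both $S_e$ and $S_o$ as an explicit hypothesis; your attempt to derive it from non-convergence of the degenerate schemes does not go through.
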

\begin{proof}
  Since $q(1)=1$, we have that $S_e + S_o=1$. Therefore, the positivity guarantees a contractivity factor:
  \[ \norm{S_q^L}  = \max\biggl\{ \sum_{j}\abs{q^L_{2j+1}},\sum_{j}\abs{q^L_{2j}} \biggr\} = \max\{ S_e, S_o \} < 1\]
\end{proof}

Note that non-negative coefficients of the difference scheme $S_q$ implies  non-negative coefficients of $S_a$, since $a(z)=(1+z)q(z)$. Moreover, if the zero coefficients of $q(z)$ inside its support are isolated, then the coefficients of $a(z)$ are positive. For such schemes, the convergence is also guaranteed by~\cite{melkman1996subdivision}, except for interpolatory schemes.

The next result gives a simple criterion to check if a scheme is not convergent.
\begin{thm} \label{thm:diverge_by_q}
    Let $S_a$ satisfy~\eqref{eqn:necessary_cond}. If $\abs{q(-1)}>1$, then $S_a$ does not converge.
\end{thm}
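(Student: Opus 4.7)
The plan is to show that the hypothesis $|q(-1)|>1$ prevents $\|S_q^L\|$ from being strictly less than $1$ for any $L \in \N$, and then invoke the ``only if'' half of the convergence characterization from Section~\ref{subsec:subdivision} (namely, that convergence of $S_a$ is equivalent to the existence of some $L$ with $\|S_q^L\|<1$).

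To implement this, I would first appeal to the preceding lemma: the row-sums $S_e^L$ and $S_o^L$ are independent of $L$, and by~\eqref{eqn:se_so_indepndentL} they satisfy $S_e=\tfrac{1+q(-1)}{2}$ and $S_o=\tfrac{1-q(-1)}{2}$. The only input is the factorization~\eqref{eqn:symbol_qL} together with the identity $q^L(\pm 1)=q(\pm 1)$, where the $-1$ case uses $(-1)^{2^k}=1$ for $k\ge 1$ so that only the first factor $q(-1)$ is nontrivial.

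Next, I would repeat the triangle-inequality step from the proof of Theorem~\ref{thm:biggerthanhalf} (equation~\eqref{eqn:bounding_sqLnorm}) to obtain
\[
\|S_q^L\| = \max\Bigl\{\sum_{j}|q^L_{2j}|,\ \sum_{j}|q^L_{2j+1}|\Bigr\} \ge \max\{|S_e|,|S_o|\} = \frac{1+|q(-1)|}{2}.
\]
Under the hypothesis $|q(-1)|>1$ the right-hand side strictly exceeds $1$, so $\|S_q^L\|>1$ for every $L\in\N$, and no power of $S_q$ can be contractive. The convergence characterization then forces $S_a$ not to converge.

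There is no real obstacle: the structural fact doing all the work (the $L$-independence of $S_e^L,S_o^L$ together with their explicit values) has already been proved, and the remainder is a two-line estimate. The only subtlety worth stating explicitly is that the conclusion relies on the necessity direction of the contractivity-based convergence criterion recalled in Section~\ref{subsec:subdivision}, rather than on the sufficiency direction used elsewhere in the paper.
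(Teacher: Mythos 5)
Your proposal is correct and follows essentially the same route as the paper: both use the $L$-independence of $S_e^L, S_o^L$ from~\eqref{eqn:se_so_indepndentL} and the triangle-inequality bound $\norm{S_q^L}\ge\max\{\abs{S_e},\abs{S_o}\}>1$ to rule out any contractive power of $S_q$. Your explicit evaluation $\max\{\abs{S_e},\abs{S_o}\}=\frac{1+\abs{q(-1)}}{2}$ and your explicit appeal to the necessity direction of the contractivity criterion are minor clarifications of the same argument.
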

\begin{proof}
 The condition $\abs{q(-1)}>1$ implies by~\eqref{eqn:se_so_indepndentL} that one of $S_e, S_o$ is larger than $1$ and the other is negative. Therefore, since $S_e^L=S_e$ and $S_o^L=S_o$ are independent of $L$,
  \[ \norm{S_q^L}  = \max\biggl\{ \sum_{j}\abs{q^L_{2j+1}},\sum_{j}\abs{q^L_{2j}} \biggr\} \ge \max\{ \abs{S_e}, \abs{S_o} \} > 1 . \]
\end{proof}
\begin{remark}
    Theorem~\ref{thm:diverge_by_q} indicates that once we evaluate $q(-1)$ and obtain $\abs{q(-1)}>1$, there is no need to verify whether $ \norm{S_q^L}<1$ for $L>1$. In general, by~\eqref{eqn:se_so_indepndentL}, if  $\norm{S_q}$ can be determined by $S_e$ and $S_o$, then there is no point in taking powers of $ S_q$ to check contractivity.
\end{remark}

\begin{cor} \label{cor:se_so_half}
    Let $S_a$ be a convergent scheme. Then, $S_e,S_o \in [0,1]$. If, in addition, $z=-1$ is a root of multiplicity  at least two of $a(z)$, then,
    \begin{equation}
        S_e=S_o=\frac{1}{2} .
    \end{equation}
\end{cor}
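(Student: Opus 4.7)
The plan is to derive both claims directly from the explicit formulas \eqref{eqn:se_so_indepndentL} for $S_e$ and $S_o$ in terms of $q(-1)$, combined with the contrapositive of Theorem~\ref{thm:diverge_by_q} and the factorization structure of $a(z)$.

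First, since $S_a$ is convergent, the contrapositive of Theorem~\ref{thm:diverge_by_q} gives $\abs{q(-1)}\le 1$, i.e., $q(-1)\in[-1,1]$. Plugging this range into \eqref{eqn:se_so_indepndentL}, namely $S_e=\frac{1+q(-1)}{2}$ and $S_o=\frac{1-q(-1)}{2}$, immediately yields $S_e,S_o\in[0,1]$, which is the first assertion.

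For the second assertion, I would use the factorization $a(z)=(1+z)q(z)$ from \eqref{eqn:necessary_cond_2}. If $z=-1$ is a root of $a(z)$ of multiplicity at least two, then $(1+z)^2$ divides $a(z)$, so $(1+z)$ divides $q(z)$, and hence $q(-1)=0$. Substituting $q(-1)=0$ into \eqref{eqn:se_so_indepndentL} gives $S_e=S_o=\tfrac{1}{2}$.

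There is no real obstacle here: both claims are essentially bookkeeping on top of the identity \eqref{eqn:se_so_indepndentL} and the already-established divergence criterion of Theorem~\ref{thm:diverge_by_q}. The only minor point worth stating carefully is that the convergence hypothesis is used only to rule out $\abs{q(-1)}>1$ (via Theorem~\ref{thm:diverge_by_q}), and that the multiplicity-two assumption is applied through the symbol factorization rather than any deeper smoothness machinery.
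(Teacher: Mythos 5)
Your proof is correct and follows essentially the same route as the paper's: both use the contrapositive of Theorem~\ref{thm:diverge_by_q} to get $\abs{q(-1)}\le 1$, read off $S_e,S_o\in[0,1]$ from \eqref{eqn:se_so_indepndentL}, and obtain $q(-1)=0$ from the double root of $a(z)$ at $z=-1$. Your write-up is in fact slightly more explicit than the paper's about why the multiplicity-two hypothesis forces $q(-1)=0$ via the factorization $a(z)=(1+z)q(z)$.
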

\begin{proof}
    Since the scheme is convergent, Theorem~\ref{thm:diverge_by_q}implieas that $\abs{q(-1)}\le 1$ Therefore, by~\eqref{eqn:se_so_indepndentL}, we get that $S_e,S_o \in [0,1]$. If $z=-1$ is a root of  multiplicity  at least two of $a(z)$, we have the special case $q(-1)=0$ 
    and $S_e=S_o=\frac{1}{2}$.
\end{proof}
The above case where both $S_e$ and $S_o$ are in $[0,1]$ is inconclusive in terms of determining convergence, since a negative coefficient may result in large $\norm{s_q}$ %or $\sigma_e$.
If the coefficients are non-negative, following Theorem~\ref{thm:positive_coefs} and Corollary~\ref{cor:se_so_half}, we can conclude:
\begin{cor}
    Let $S_a$ be a scheme such that the coefficients of $q(z)$ are non-negative. If $z=-1$ is a root of multiplicity two at least of $a(z)$, then the scheme has the optimal contractivity factor $\frac{1}{2}$.
\end{cor}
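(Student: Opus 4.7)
The plan is to combine the two previous results, Theorem~\ref{thm:positive_coefs} and Corollary~\ref{cor:se_so_half}, essentially as a one-line argument, and then invoke Theorem~\ref{thm:biggerthanhalf} to see that the resulting factor $\tfrac12$ is in fact optimal.

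First I would unpack the multiplicity hypothesis. Since $a(z)=(1+z)q(z)$, saying that $z=-1$ is a root of $a(z)$ of multiplicity at least two is equivalent to saying that $(1+z)$ still divides $q(z)$, i.e.\ $q(-1)=0$. Plugging this into~\eqref{eqn:se_so_indepndentL} immediately yields $S_e=S_o=\tfrac12$; this is exactly the content of the second statement of Corollary~\ref{cor:se_so_half}, so I can also just cite it. Note that to apply that corollary I first need to know that $S_a$ is convergent, but the non-negativity of the coefficients of $q(z)$ gives this for free via Theorem~\ref{thm:positive_coefs} (which, crucially, does not itself require convergence as a hypothesis, only~\eqref{eqn:necessary_cond}; and~\eqref{eqn:necessary_cond} follows from the factorization $a(z)=(1+z)q(z)$ with $q(1)=1$, which is encoded in the very definition of $q$ used throughout the paper).

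Second, I would apply Theorem~\ref{thm:positive_coefs} directly: it asserts that a scheme with $q$ having non-negative coefficients is convergent with contractivity factor $\mu=\max\{S_e,S_o\}$. Inserting $S_e=S_o=\tfrac12$ from the previous step gives $\mu=\tfrac12$, with $L=1$, which is the desired conclusion. Finally, to justify the word ``optimal'' I would cite Theorem~\ref{thm:biggerthanhalf}, which shows that no convergent scheme can achieve $\|S_q^L\|<\tfrac12$ for any $L$, so in particular the contractivity factor can never be smaller than $\tfrac12$.

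There is no real obstacle here: the entire statement is a short chain of deductions from the results already established in this subsection. The only point that requires a moment of care is making explicit the passage from ``$z=-1$ is a root of $a(z)$ of multiplicity $\geq 2$'' to ``$q(-1)=0$,'' so that Corollary~\ref{cor:se_so_half} can be invoked; the rest is purely a matter of stringing together Theorem~\ref{thm:positive_coefs}, Corollary~\ref{cor:se_so_half}, and Theorem~\ref{thm:biggerthanhalf}.
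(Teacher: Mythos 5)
Your proposal is correct and matches the paper's intended argument: the paper states this corollary as an immediate consequence of Theorem~\ref{thm:positive_coefs} and Corollary~\ref{cor:se_so_half} (giving $q(-1)=0$, hence $S_e=S_o=\tfrac12$, hence $\mu=\max\{S_e,S_o\}=\tfrac12$), with optimality coming from Theorem~\ref{thm:biggerthanhalf}. Your only addition is to spell out the passage from the multiplicity hypothesis to $q(-1)=0$ and the fact that convergence need not be assumed separately, which is a faithful elaboration rather than a different route.
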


We anticipate that the results of this section can be extended to multivariate schemes, over $\R^d$, with symbols of the form $$a(z_1,z_2,\ldots,z_d) = (1+z_1)(1+z_2)\cdots (1+z_d)q(z). $$

%----------------------------------------
\subsection{Application to the algorithm for determining convergence}

We %recall Algorithm~\ref{alg:conver_test} and 
present our improved version of Algorithm~\ref{alg:conver_test} in Algorithm~\ref{alg:conver_test_improved}, incorporating the results of Theorem~\ref{thm:positive_coefs} and Theorem~\ref{thm:diverge_by_q}.

As a final remark regarding the applicability of Algorithm~\ref{alg:conver_test_improved}, we consider the question of determining the smoothness of the limits of a given subdivision scheme. %It is known that
Relation~\eqref{eqn:Sa_Sq} implies that 
$$\frac{\Delta f^{k+1}}{2^{-(k+1)}} = 2 S_q \left(\frac{\Delta f^k}{2^{-k}}\right) , $$ 
namely, $2S_q$ refines the divided differences of the sequences 
generated by $S_a$. Since the limits of the divided differences are values of the derivative of $S_a^\infty f^0$,
a convergent scheme $S_a$ generates $C^1$ limits if $2 S_q$ is convergent. We can check this sufficient condition by applying Algorithm~\ref{alg:conver_test_improved} to $2S_q =S_{2q}$ instead of $S_a$. For higher smoothness $C^{\, n}$, we look at the scheme refining the $n$-th order divided difference by considering the symbol decomposition when it exists,
\begin{equation} \label{eqn:smoothness}
    a(z) = (1+z)^{n} \, q(z) .
\end{equation}
For more details see \cite[Theorem 4]{dyn2002analysis}. Therefore, we can employ  Algorithm~\ref{alg:conver_test_improved} to check the convergence of $S_{2^n q}$, the scheme with symbol $2^n q(z)$ with $q(z)$ of~\eqref{eqn:smoothness}, which is a sufficient 
condition for the $C^{\, n}$ smoothness of $S_a$.

\begin{algorithm}[H]
\caption{Convergence of linear subdivision scheme: Improved version}
\label{alg:conver_test_improved}
\begin{algorithmic}[1]
\REQUIRE The symbol $a(z)$. The maximum number of iterations $M$.
\ENSURE The convergence status of the scheme
\IF{$a(1) \neq 2$ or $a(-1) \neq 0$} 
\RETURN \textit{``The scheme does not converge''}
\ENDIF
\STATE $q(z) \gets \frac{a(z)}{1+z}$.
%\COMMENT{Apply~Theorem~\ref{thm:diverge_by_q}}
\IF[Apply~Theorem~\ref{thm:diverge_by_q}]{$\abs{q(-1)} >1 $}
\RETURN \textit{``The scheme does not converge''}  
\ENDIF
\IF[Apply~Theorem~\ref{thm:positive_coefs}]{The coefficients of $q$ are non-negative}
%\COMMENT{Apply~Theorem~\ref{thm:diverge_by_q}}
\STATE  $\mu = \max\biggl\{ \sum_{j}q_{2j+1},\sum_{j}q_{2j} \biggr\}$
\RETURN \textit{``The scheme converges with contractivity factor $\mu$ and with contractivity number $1$''} 
\ENDIF
\STATE Set $q_{1}(z)=q(z)$ and denote $q_{1}(z)=\sum_{i} q_{i}^{[1]} z^{i}$.
\FOR{$L=1, \ldots, M$}
\IF{$\max_{0 \leq i<2^{L}} \sum_{j\in \Z}\abs{q_{i-2^{L} j}^{[L]}}<1$} 
\STATE  $\mu = \max_{0 \leq i<2^{L}} \sum_{j\in \Z}\abs{q_{i-2^{L} j}^{[L]}}$
\RETURN \textit{``The scheme is convergent with contractivity factor $\mu$ and with contractivity number $L$''}
\ELSE
\STATE  Set $q_{L+1}(z)=q(z) q_{L}\left(z^{2}\right)$ and denote $q_{L+1}(z)=\sum_{i} q_{i}^{[L+1]} z^{i}$.
\ENDIF
\ENDFOR     
\RETURN \textit{``Inconclusive! $\norm{S^k_{q}}\ge 1$ %is not contractive 
for all $k \le M$''}
\end{algorithmic}
\end{algorithm}

%% ====================================
%\section{Conclusions and open questions} 
%\textbf{TBA}

% ====================================

\bibliographystyle{plain} %unsrt
\bibliography{The_bib}

% ====================================
\end{document}